\title{Global existence and Blow-up for the 1D  damped compressible Euler equations with  time and space dependent  perturbation }
 \author{Yuusuke Sugiyama\footnote{e-mail:sugiyama.y@e.usp.ac.jp\ The university of  shiga prefecture}}
\date{}
\theoremstyle{definition} 
\newtheorem{Def}{Deffinition}[section]
\newtheorem{Lemma}[Def]{Lemma}
\newtheorem{Thm}[Def]{Theorem}
\newtheorem{Remark}[Def]{Remark}
\newcommand{\R}{\mathbb{R}} 
\begin{document}
\maketitle %
\begin{abstract}
In this paper, we consider the 1D Euler equation with time and space dependent damping term $-a(t,x)v$.
It has long been known that when $a(t,x)$ is a positive constant or $0$, the solution exists globally in time or blows up in finite time, respectively.
We prove that those results are invariant with respect to time and space dependent perturbations.
We suppose that the coefficient $a$ satisfies the following  condition
$$
|a(t,x)-  \mu_0| \leq a_1(t) + a_2 (x),
$$
where $\mu_0 \geq 0$ and $a_1$ and $a_2$ are integrable functions with $t$ and $x$.
Under this condition, we show the global existence and the blow-up with small initial data, when $\mu_0 >0$ and $\mu=0$ respectively. 
\end{abstract}
%

\section{Introduction}
\subsection{Problem}
In this paper, we consider the following Cauchy problem of the compressible Euler equation with time and space dependent  damping
\begin{eqnarray} 
\left\{  \begin{array}{ll} \label{de0}
  u_t - v_x  =0,\\
 v_t + p(u)_x = -a(t,x)v , \\   
   (u(0,x), v(0,x))=(1+ \varepsilon \varphi (x), \varepsilon \psi (x)).
\end{array} \right.  
\end{eqnarray} 
Here $x \in \R$ is the Lagrangian spatial variable and $t \in \R_+$ is time. $u=u(t,x)$ and $v=v(t,x)$  are the real valued unknown functions, which stand for the specific volume and the fluid  velocity.
$\varepsilon $ is a positive and small parameter.
In the case with $a\equiv 0$, the equations in \eqref{de0} are the compressible Euler, which is a fundamental model for the compressible inviscid fluids.
In the case with $a\not\equiv 0$, this system describes the flow of fluids in  porous media.
We assume that the flow is barotropic ideal gases. Namely the pressure $p$ satisfies that 
\begin{eqnarray}\label{poly}
p(u)=\frac{u^{-\gamma}}{\gamma} \ \  \mbox{for} \  \gamma > 1. 
\end{eqnarray}
For initial data, in order to avoid the singularity of $p'$, we assume that there exists  constant $\delta_0 >0$  such that for all $x \in \R$
\begin{eqnarray} \label{nosin}
u (0, x) \geq \delta_0.
\end{eqnarray}
The local existence and the uniqueness of solutions of \eqref{de0} hold with $C^1 _b$ initial data (e.g. Friedrichs \cite{KF}, Lax \cite{lax0} and Li and Wu \cite{LW}).
If $u(0,\cdot ), v (0,\cdot ) \in C^1 _b (\R)$ and $a \in C^1 _b ([0,\infty)\times \R)$ and \eqref{nosin} is assumed, then \eqref{de0} has a local and unique solution $(u,v) \in C^1 _b ([0,T] \times \R)$ such that $u(t,x)>0$.
\subsection{Assumption on $a(t,x)$}
We impose that the damping coefficient  is a integrable perturbation with  a constant $\mu_0 \geq 0$.
In that is, for the damping coefficient $a(t,x) \in C^1 _b ([0,\infty) \times \R)$, we suppose that there exist  two non-negative functions $a_1 \in C^1 _b( [0,\infty))$ and $a_2  \in C^1 _b( \R)$ such that
\begin{eqnarray} 
|a(t,x)-\mu_0| + |a_x (t,x)| +  |a_t (t,x)| \leq a_1 (t) + a_2 (x),  \label{asg-a}\\
\int_0 ^\infty a_1 (t) dt  +  \int_{-\infty} ^\infty a_2 (x) dx = C_a  <\infty, \label{asg-b} \\
x \dfrac{d  }{dx}a_2 (x)  \leq 0. \label{asg-c} 
\end{eqnarray}
Furthermore, when $\mu_0 >0$, we assume that 
\begin{align} \label{asg-d}
a (t,x)  \geq 0 .
\end{align}
The typical example of $a(t,x)$ is
\begin{eqnarray*}
a(t,x) =\mu_0 + \mu_1 (1+t)^{-\lambda_1} + \mu_2 (1+|x|)^{-\lambda_2},
\end{eqnarray*}
where  $\mu_1, \mu_2  \geq - \mu_0$, $\mu_1 + \mu_2 \geq - \mu_0$ and $\lambda_1 , \lambda_2 >1$. To obtain the blow-up result ($\mu_0 =0$), the  restriction on $\mu_1$ and $\mu_2$ is not necessary. From the Young inequality, the above condition also covers the following  example
\begin{eqnarray*}
a(t,x) = \mu_0 + (1+t)^{-\lambda_1}  (1+|x|)^{-\lambda_2},
\end{eqnarray*}
where $\lambda_1$ and $\lambda_2$ are non-negatives such that $\lambda_1 + \lambda_2 >1$.

\subsection{Main theorems}
To state the main theorems, we define the life-span $T^*$ by
\begin{align} \label{t*}
	T^* =& \sup\{T>0 \ | \   \sup_{t \in [0,T )} \{  \| (u_t,v_t)(t)\|_{L^{\infty}} \\
	&+\| (u_x,v_x)(t)\|_{L^{\infty}} +  \| p'(u(t))\|_{L^{\infty}} \} < \infty \}. \notag
\end{align}
For $t \geq 0$, we  set 
\begin{eqnarray*}
	\Omega_{+} (t) = \{  x \in \R   \  |   \   x \geq x_+(t;0,0)    \}, \\
	\Omega_{-} (t) = \{  x \in \R   \  |   \   x \leq x_-(t;0,0)    \},
\end{eqnarray*}
and
\begin{align*}
	\Omega (t) = \Omega_+ (t)  \cup \Omega_{-} (t)
\end{align*}
where  $x_\pm (t;t_0,x_0)$ is the plus and minus characteristics through $(t_0,x_0)$ (see \eqref{chc} for the definition).
Our main results are as follows.
\begin{Thm}(Blow up)\label{main5}
	Let  $\gamma >1$, $\mu_0 =0$ and $( \varphi,  \psi) \in C^1 _b (\R)$.  Suppose that \eqref{asg-a}-\eqref{asg-c} are satisfied and that
	\begin{align} \label{KK}
		\psi_x (x_0) \leq -K 
	\end{align}
	for  some point $x_0 \in \R$ and a constant $K=K(\| \varphi  \|_{C^1 (\R)}, \| \psi   \|_{L^{\infty} (\R)}) >0$.
	Then there exists a number $\varepsilon _0 >0$ such that
	if $\varepsilon \leq \varepsilon_0 $ then
	\begin{eqnarray*}
		T^*  \leq C\varepsilon^{-1} .
	\end{eqnarray*}
	Furthermore,  if \eqref{asg-a}-\eqref{asg-d} are satisfied, then  we have that 
\begin{align*}
C'  \varepsilon^{-1} \leq T^*  \leq C\varepsilon^{-1} 
\end{align*}
and
	\begin{align*}
 \limsup_{t \rightarrow T^*} \|(u_x, v_x) (t)\|_{L^\infty} =\infty.
	\end{align*}
\end{Thm}
\begin{Thm}(Global existence)\label{main5g}
Let  $\gamma >1$, $\mu_0 >0$ and $( \varphi,  \psi) \in C^1 _b (\R)$.  Suppose that \eqref{asg-a}-\eqref{asg-d} are satisfied.
 Then there exists a number $\varepsilon _1 = \varepsilon _1 (\|f\|_{C^1 _b}, \|g\|_{C^1 _b})$ such that
if $\varepsilon \leq \varepsilon_1 $, then  \eqref{de0} has a  global and unique $C^1 _b$-solution such that 
\begin{align*}
\|(u,v)\|_{L^\infty([0,\infty)\times \R)} + \|(u_t,v_t)\|_{L^\infty([0,\infty)\times \R)}\\
+ \|(u_x,v_x)\|_{L^\infty([0,\infty)\times \R)}+ \| p'(u)\|_{L^\infty ([0,\infty)\times \R)}  \leq C\varepsilon 
\end{align*}

\end{Thm}

\begin{Remark}{\bf Assumption on the smallness of initial data.}
One can show the same statement as in Theorem \ref{main5} without the smallness of derivatives of initial data. 
To holds theorem \ref{main5},  it is enough to assume $\|r(0,\cdot )\|_{L^\infty}$ and  $\|s(0,\cdot )\|_{L^\infty}$  are small,
where $r$ and $s$ are the Riemann invariants defined in \eqref{ri}.
In stead of \eqref{KK}, we assume that with $\varepsilon =1$
\begin{align*}
r_x (0,0) > - K^* \  \mbox{or} \ s_x (0,0) > -K^*, 
\end{align*}
where  $K^*$ is a positive constant depending on 
$\|r(0,\cdot )\|_{L^\infty}$,  $\|s(0,\cdot )\|_{L^\infty}$, $\gamma$ and  $C_a$ in \eqref{asg-a}.
Then the solution blows up in finite time, and  the life-span is bounded by
\begin{align*}
 \frac{C}{r_x (0,0)-K^*} \  \mbox{or} \ \frac{C}{s_x (0,0)-K^*}.
\end{align*}
This can be shown in same way as in the proof of Theorem \ref{main5}, since the key estimate in Lemma \ref{esP} 
is valid under the assumption of the smallness of  $\|r(0,\cdot )\|_{L^\infty}$ and  $\|s(0,\cdot )\|_{L^\infty}$  only (not derivative of $r$ and $s$).
\end{Remark}

\subsection{Known results}
If $a \equiv 0$, it is well-known that solutions can blow up in finite time with even small initial data (e.g. Lax \cite{lax} and Zabusky \cite{z}).
It is not hard to show that $C' \varepsilon^{-1}  \leq  T^* \leq  C \varepsilon^{-1} $ in this case.
Here the lower estimate "$T^* \geq$" means  the minimal existence time of solutions for arbitrary function $\varphi$ and $\psi$. On the other hand hand,  the upper estimate "$T^* \leq$" means maximal existence time for some $\varphi$ and $\psi$.
In the case that $a(t,x)\equiv 1$, Hsiao and  Liu in \cite{HL} have proved that  classical solutions exist globally in time,  if initial data are small perturbations near constant states and that  small solutions asymptotically behave like
that to the following porous media system as $t\rightarrow \infty$:
\begin{eqnarray*} 
\left\{  \begin{array}{ll} 
 \bar{u}_t = -p(\bar{u})_{xx},\\
 \bar{v}=-p(\bar{u})_x .
\end{array} \right.  
\end{eqnarray*} 
After Hsiao and  Liu's work, many improvements and generalizations of this work have been investigated (see Hsiao and  Liu \cite{HL2}, Nishihara \cite{NK}, Hsiao and Serre \cite{HS}, 
Marcati  and Nishihara \cite{MN} and Mei \cite{Mei}).  
Let us review previous results for the following 1D Euler equation with  time dependent damping case:
\begin{eqnarray} 
\left\{  \begin{array}{ll} \label{det}
  u_t - v_x  =0,\\
 v_t + p(u)_x = -\dfrac{\mu v}{(1+t)^\lambda}  , \\    
\end{array} \right.  
\end{eqnarray} 
where $\lambda \geq 0$ and $\mu \in \R$.
In  \cite{XP1, XP2, XP3}, Pan has established critical phenomena for $\mu$ and $\lambda$. 
Namely, in the case with $0\leq \lambda <1$ and $\mu >0$ or $\lambda=1$ and $\mu >2$,  Pan \cite{XP2} has proved  that solutions  of \eqref{det} exist globally in time, while, in the case with $\lambda >1$  or $\lambda=1$ and $0 \leq \mu \leq 2$, he  has proved that solutions of \eqref{det} can blow up under some conditions on initial data in \cite{XP1, XP3}.
In the previous author's papers \cite{S1}, the optimal life-span estimates are given.
To summarize, the following estimates are given:
\begin{eqnarray*} 
	&  C' \varepsilon^{-1}   \leq T^* \leq  C\varepsilon^{-1}  \ \ \mbox{for} \  \lambda>1 \ \mbox{and} \ \lambda\geq 0,   \\
	&   C'\varepsilon^{-\frac{2}{2-\lambda}} \leq  T^* \leq C\varepsilon^{-\frac{2}{2-\lambda}}  \ \ \mbox{for} \ \ \lambda=1  \ \mbox{and} \  0\leq \mu < 2, \\
	&  e^{\frac{C'}{\varepsilon }} \leq  T^* \leq e^{\frac{C}{\varepsilon }}\ \ \mbox{for} \ \lambda=1 \ \mbox{and} \ \mu =2,   \\
	& T^* = \infty  \ \ \mbox{for}  \ \mu >0 \ \mbox{and} \   0 \leq \lambda <1   \ \mbox{or}  \ \mu >2 \ \mbox{and} \    \lambda = 1 .
\end{eqnarray*}
The study of  the Euler equation with the time-dependent damping \eqref{det} is motivated by Wirth's papers \cite{JW1, JW2, JW3}.
In this series of papers, he  has found  that  thresholds of $\lambda$ and $\mu$ separating the large time behavior of solutions to the linear  wave equation  with 
time-dependent damping:
\begin{eqnarray*}
u_{tt} - \Delta u = -\dfrac{\mu u}{(1+t)^\lambda}.
\end{eqnarray*}
In \cite{S1}, the author also has  proved that $C' \varepsilon^{-1}  \leq  T^* \leq  C \varepsilon^{-1} $ holds when $a(t,x)$ satisfies that
\begin{align}
|a(t,x)| + |a_x (t,x)| +  |a_t (t,x)| \leq C(1+t)^{-\lambda}
\end{align}
for $\lambda>1$.
For $x$ dependent case,  Chen, Young and Zhang's result in \cite{GYZ} can be applicable (see Remark \ref{non-is}).
By applying  their method to the case that $a_1 \equiv 0$, we can estimate the life-span as $C' \varepsilon^{-1} \leq T^* \leq C \varepsilon^{-1} $.
In  \cite{GYZ}, they have considered the following non-isentropic compressible Euler equations (see also  Chen,  Pan and  Zhu \cite{GPZ}, Zheng \cite{hz} and Pan and Zhou \cite{PZ}):
 \begin{eqnarray} 
\left\{  \begin{array}{ll} \label{degp}
  u_t - v_x  =0,\\
 v_t + p(S,u)_x = 0 , \\   
S_t =0,
\end{array} \right.  
\end{eqnarray} 
 where $p(S,u)=e^{S(x)} u^{-\gamma} /\gamma$ with $1<\gamma <3$ and $S=S(x)$ is a given function such that$\int_{\R} |S'(x)| dx < \infty$.
They have established that the formation of singularities (blow-up) occurs in finite time.
In \cite{GYZ}, the key for the proof is the uniform boundedness of Riemann invariant  by using non-trivial characteristic method.
It seems  hard to apply the method in \cite{GYZ} to the case that $a_1 \not\equiv 0$ in \eqref{asg-a}, since the use of $x$ variable characteristics 
$t_{\pm} (x)$ plays an important role to show the uniform boundedness of Riemann invariant.
Very recently, Geng, Lai, Yuen and Zhou in \cite{GLYZ}  give  a new condition for the blow-up  for \eqref{de0} with $a(t,x)=\mu / (1+|x|)^{\lambda}$ and $\lambda >1, \mu >0$ by the test function method. 
Their proof seems to essentially use the compactness of initial data (the finiteness of  the propagation speed), and it  does not seems trivial  to apply their method to the case that  the damping term depends on both $t$ and $x$.

In  Chen et al. \cite{CLLMZ}, they have  considered  \eqref{de0} and \eqref{det}. 
They have  show that the global existence for small  perturbed initial data, when $a(t,x)$ fulfills that with $0<\lambda<1$
\begin{align*}
C_1 (1+t)^{-\lambda} \leq a(t,x)\leq C_2 (1+t)^{-\lambda}, \\
|a_x (t,x)| +  |a_t (t,x)| \leq C(1+t)^{-2\lambda}.
\end{align*}
To the best of the author's knowledge, this is the only result of the global existence when the damping coefficient depends on spatial variable.
The method in  Chen et al. \cite{CLLMZ} to prove uniform estimate for the Riemann invariant  is based on a new maximum principle.
To apply it, they also suppose  that  $u_0 (x), v_0 (x)$ and $a(t,x)$ converge as $|x| \rightarrow \infty$.

 \begin{Remark}{\bf Large data regime}
If $a \equiv 0$, to show the occurrence of the blow-up, restrictions on $\|r(0,x)\|_{L^\infty}$ and $\|s(0,x)\|_{L^\infty}$ are not necessary. Namely the necessary and sufficient condition for the global existence has been given  by Chen, Pan and Zhu's paper \cite{GPZ}.
They has shown that the global classical solution exists if and only if
\begin{align*}
r_x (0,x) \geq 0 \ \mbox{and} \ s_x (0,x) \geq 0.
\end{align*} 
In  Chen et al. \cite{CLLMZ}, they give a new sufficient condition of global existence for the 1D damped Euler equation.
They show the solution globally exists with monotonically decreasing or increasing  and large initial data.
\end{Remark}

 \begin{Remark}{\bf The linear wave equation with space independent damping.}
The study of  the Euler equation with the time-dependent damping \eqref{det} is motivated by Wirth's papers.
In \cite{JW1}, he studies the behavior of solutions to 
$$u_{tt} - \Delta u=- \dfrac{\lambda u_t}{(1+t)^{\mu}}.$$ 
In \cite{JW2, JW3}, he gives  a threshold of $\mu$  separating  solutions to this equation asymptotically behave like
that to the corresponding heat or wave equation.
In \cite{JW1}, the critical case of the threshold  is studied.
\end{Remark}

\subsection{Idea of the proof}
In the previous results (\cite{GYZ, S1, CLLMZ}), the key for the proof is uniform estimates of the Riemann invariant.
More precisely,   for time-decaying  damping case ($a_2 \equiv 0$), the proof is based on the method of characteristic with $t$ valuable. On the other hand, 
characteristic with $x$ valuable is used for spacial decaying damping case ($a_1 \equiv 0$) to control the Riemann invariant Thus, it seems difficult to balance the two method.

The idea of the proof of Theorem \ref{main5} is to estimate the Riemann invariant in the region outside $\Omega (t)$. 
In the proof, we  use the following two important properties of this domain. The first is the property of trapping backward characteristic curves by the region. Second, it is possible to replace the spatially decaying damping coefficient with a time-decaying one in the region.

In the proof of Theorem \ref{main5g}, we decompose $\R$  into the outer regions $\Omega_\pm (t)$  and inner region $I(t)$,
where we define
\begin{align*}
I(t) =  \{  x \in \R \  |  \   x_- (t;0,0) \leq  x  \leq x_+ (t;0,0)    \}.
\end{align*}
In the outer regions,  the Riemann invariant can be estimated by the same method as in the proof of Theorem \ref{main5}.
In the inner region, we use  the  maximal principle, established in  \cite{CLLMZ}. 
However, we present a simple  and more maximal principle like proof for this estimate. 
\begin{center}{Figure 1 : Decomposition of $\R$}
{\unitlength 0.1in%
\begin{picture}(47.3200,21.9400)(3.4000,-24.8600)%
%
\special{pn 8}%
\special{pa 340 2467}%
\special{pa 4754 2467}%
\special{fp}%
%
\special{pn 8}%
\special{pa 3105 1740}%
\special{pa 4717 1740}%
\special{fp}%
%
\special{pn 8}%
\special{pa 1737 1740}%
\special{pa 1737 1740}%
\special{fp}%
%
\special{pn 8}%
\special{pa 1737 1740}%
\special{pa 364 1740}%
\special{fp}%

\put(49.4200,-24.5600){\makebox(0,0)[lb]{$x$}}%
\put(23.8400,-4.4400){\makebox(0,0)[lb]{$t$}}%
%
\special{pn 8}%
\special{pa 4748 2467}%
\special{pa 5072 2466}%
\special{fp}%
\special{sh 1}%
\special{pa 5072 2466}%
\special{pa 5005 2446}%
\special{pa 5019 2466}%
\special{pa 5005 2486}%
\special{pa 5072 2466}%
\special{fp}%
%
\special{pn 8}%
\special{pa 2492 2466}%
\special{pa 2492 306}%
\special{fp}%
\special{sh 1}%
\special{pa 2492 306}%
\special{pa 2472 373}%
\special{pa 2492 359}%
\special{pa 2512 373}%
\special{pa 2492 306}%
\special{fp}%
\put(34.6800,-17.1500){\makebox(0,0)[lb]{$\Omega_+ (t)$}}%
\put(8.1500,-16.8000){\makebox(0,0)[lb]{$\Omega_- (t)$}}%
\put(25.7500,-10.1000){\makebox(0,0)[lb]{$x=x_+ (t;0,0)$}}%
\put(4.6500,-9.6000){\makebox(0,0)[lb]{$x=x_- (t;0,0)$}}%
%
\special{pn 8}%
\special{pa 1750 1740}%
\special{pa 3108 1740}%
\special{da 0.070}%
\put(21.8600,-16.6800){\makebox(0,0)[lb]{$I(t)$}}%
%
\special{pn 8}%
\special{pa 2490 2464}%
\special{pa 2535 2377}%
\special{pa 2550 2349}%
\special{pa 2566 2320}%
\special{pa 2598 2264}%
\special{pa 2615 2237}%
\special{pa 2632 2209}%
\special{pa 2649 2182}%
\special{pa 2668 2156}%
\special{pa 2686 2130}%
\special{pa 2726 2080}%
\special{pa 2747 2056}%
\special{pa 2769 2032}%
\special{pa 2792 2010}%
\special{pa 2816 1988}%
\special{pa 2841 1967}%
\special{pa 2867 1946}%
\special{pa 2894 1927}%
\special{pa 2923 1909}%
\special{pa 2951 1890}%
\special{pa 2977 1871}%
\special{pa 2998 1848}%
\special{pa 3011 1821}%
\special{pa 3019 1790}%
\special{pa 3024 1758}%
\special{pa 3030 1726}%
\special{pa 3040 1695}%
\special{pa 3053 1667}%
\special{pa 3069 1640}%
\special{pa 3087 1614}%
\special{pa 3108 1589}%
\special{pa 3130 1566}%
\special{pa 3153 1543}%
\special{pa 3177 1520}%
\special{pa 3202 1497}%
\special{pa 3226 1475}%
\special{pa 3274 1429}%
\special{pa 3296 1405}%
\special{pa 3317 1380}%
\special{pa 3336 1354}%
\special{pa 3353 1327}%
\special{pa 3368 1299}%
\special{pa 3394 1241}%
\special{pa 3405 1211}%
\special{pa 3417 1180}%
\special{pa 3428 1150}%
\special{pa 3440 1120}%
\special{pa 3453 1091}%
\special{pa 3467 1062}%
\special{pa 3497 1006}%
\special{pa 3513 978}%
\special{pa 3547 924}%
\special{pa 3583 870}%
\special{pa 3619 818}%
\special{pa 3638 791}%
\special{pa 3656 766}%
\special{fp}%
%
\special{pn 8}%
\special{pa 2490 2462}%
\special{pa 2462 2445}%
\special{pa 2433 2428}%
\special{pa 2377 2394}%
\special{pa 2349 2376}%
\special{pa 2295 2340}%
\special{pa 2243 2302}%
\special{pa 2218 2282}%
\special{pa 2194 2261}%
\special{pa 2148 2217}%
\special{pa 2127 2194}%
\special{pa 2106 2170}%
\special{pa 2087 2145}%
\special{pa 2069 2119}%
\special{pa 2052 2092}%
\special{pa 2004 2008}%
\special{pa 1987 1980}%
\special{pa 1970 1953}%
\special{pa 1952 1927}%
\special{pa 1931 1902}%
\special{pa 1909 1879}%
\special{pa 1885 1858}%
\special{pa 1860 1838}%
\special{pa 1834 1818}%
\special{pa 1808 1799}%
\special{pa 1783 1780}%
\special{pa 1758 1759}%
\special{pa 1736 1738}%
\special{pa 1716 1715}%
\special{pa 1698 1690}%
\special{pa 1681 1664}%
\special{pa 1667 1637}%
\special{pa 1655 1608}%
\special{pa 1644 1579}%
\special{pa 1634 1549}%
\special{pa 1618 1485}%
\special{pa 1612 1453}%
\special{pa 1606 1420}%
\special{pa 1601 1386}%
\special{pa 1596 1353}%
\special{pa 1591 1319}%
\special{pa 1587 1285}%
\special{pa 1582 1252}%
\special{pa 1577 1218}%
\special{pa 1572 1185}%
\special{pa 1566 1152}%
\special{pa 1560 1120}%
\special{pa 1553 1088}%
\special{pa 1545 1056}%
\special{pa 1538 1025}%
\special{pa 1529 994}%
\special{pa 1521 963}%
\special{pa 1512 933}%
\special{pa 1503 902}%
\special{pa 1485 842}%
\special{pa 1475 812}%
\special{pa 1466 782}%
\special{pa 1456 752}%
\special{fp}%
\end{picture}}
\end{center}

\subsection*{Notation}
 For $\Omega \subset \R^n$, $C^1 _b (\Omega)$ are the set of  bounded and continuous functions whose first partial derivatives are also bounded on $\Omega$.
The norm of $C^1 _b (\Omega)$  is $\|f \|_{C^1 _b (\Omega)} = \| f \|_{L^\infty  (\Omega)} + \| (\partial_{x_1} f,\ldots,\partial_{x_n}f) \|_{L^\infty (\Omega)}$.
When $\Omega = \R$, for abbreviation, we denote $\|\cdot  \|_{L^\infty (\Omega)} $ and $\|\cdot  \|_{C^1 _b (\Omega)} $  by $ \|\cdot  \|_{L^\infty}$ and $\|\cdot  \|_{C^1 _b }$
respectively.

\subsection*{Plan of  the paper}
The remainder of the present paper is organized as follows. In Section $2$, we present some formulas of the Riemann invariant on the characteristic curves and some useful estimate.
In Section $3$, we give an estimate for the Riemann invariant in the Outer region $\Omega (t)$.  In Section $4$, the maximal principle in the inner region $I(t)$ is given.
In Sections $5$ and $6$, Theorems \ref{main5} and \ref{main5g} are proved respectively.

\section{Preliminaries}
\subsection{Formulas for Riemann invariants}
Now we prepare  some identities for Riemann invariants.
We set $c=\sqrt{-p'(u)}$ and $\eta= \int_{u} ^{\infty} c(\xi) d\xi=\frac{2}{\gamma-1} u^{-(\gamma-1)/2}$
and define  (shifted) Riemann invariants as follows:
\begin{eqnarray} \label{ri}
	\begin{array}{ll}
		r = v-\eta + \frac{2}{\gamma-1}, \\
		s = v+\eta - \frac{2}{\gamma-1}.
	\end{array}
\end{eqnarray}
This type of the Riemann invariant is used in \cite{S2}.
For $c=\sqrt{-p' (u)}$, the plus and minus characteristic curves are solutions to the following deferential equations:
\begin{align} \label{chc}
\frac{d x_{\pm}}{dt}(t) = \pm  c(t,u(t,x_{\pm} (t))).
\end{align}
If necessary, we denote the characteristic curves through $(t,x)$  by $x_{\pm} (\cdot ;t,x)$.
Riemann invariants $r$ and $s$  are solutions to 
\begin{eqnarray}\label{ww}\left\{
\begin{array}{ll} 
\partial_- r =-\dfrac{a(t,x)}{2}(r+s),\\
 \partial_+ s =-\dfrac{a(t,x)}{2}(r+s),
\end{array}\right.
\end{eqnarray}
where $\partial_{\pm} = \partial_t \pm c \partial_x$. 
Putting $ \bar{a}(t,x) = a(t,x) - \mu_0$, on the characteristic curves through $(t,x)$, we have that
\begin{align}
e^{\frac{\mu_0 t}{2}}  r(t,x) = & r(0,x_{-} (0)) - \int_0 ^t e^{\frac{\mu_0 \tau}{2}} \frac{\mu_0 s(\tau , x_{-}(\tau))}{2} d\tau  \notag \\
& -\int_0 ^t  e^{\frac{\tau}{2}}\frac{\bar{a}(t,x_{-} (\tau))}{2} (r(\tau , x_{-}(\tau)) + s(\tau , x_{-}(\tau)))  d \tau, \label{riv}  \\
e^{\frac{\mu_0 t}{2}}  s(t,y) = & s(0,x_{+} (0)) - \int_0 ^t e^{\frac{\mu_0 \tau}{2}} \frac{\mu_0 r(\tau , x_{+}(\tau))}{2} d\tau  \notag \\
& -\int_0 ^t  e^{\frac{\tau}{2}}\frac{\bar{a}(t,x_{+} (\tau))}{2} (r(\tau , x_{+}(\tau)) + s(\tau , x_{+}(\tau)))  d \tau. \label{siv}
\end{align}
Differentiating the equations in \eqref{ww} with $x$, from the identity $s_x - r_x = 2 \eta_x =-2c u_x$, we have
\begin{eqnarray}\label{rsx}\left\{
\begin{array}{ll} 
\partial_- r_x =\dfrac{c'}{2c}r_x (s_x -r_x) - \dfrac{a(t,x) }{2} ( r_x + s_x) -\dfrac{a_x(t,x) }{2}(r+s),\\
 \partial_+  s_x = \dfrac{c'}{2c}r_x (r_x -s_x)- \dfrac{a(t,x) }{2}( r_x + s_x) -\dfrac{a_x(t,x) }{2}(r+s) .
\end{array}\right.
\end{eqnarray}
Now we rewrite \eqref{rsx} as  integral equalities.
We define $\theta_\gamma (u)$ as follows:
\begin{eqnarray*} 
\theta_{\gamma} (u) =\left\{ \begin{array}{ll}
\frac{4}{3-\gamma}  u^{\frac{3-\gamma}{4}}- \frac{4}{3-\gamma} \ \ \mbox{for} \ \gamma \not= 3,\\
\log u \ \ \mbox{for} \  \gamma = 3.
\end{array}
\right. 
\end{eqnarray*}
From \eqref{de0} and the definitions of $c$ and $\eta$, $r_x$ and $s_x$ can be written by
\begin{eqnarray} \label{riex}
	\begin{array}{ll}
		r_x = u_t  + c(u)u_x, \\
		s_x = u_t  - c(u) u_x.
	\end{array}
\end{eqnarray}
We set $A_\pm (\tau,t,x) = \exp\left( \int_0 ^\tau  a(\bar{\tau},x_{\pm} ( \bar{\tau};t,x))/2 d\bar{\tau}\right)$ 
and abbreviate $A_\pm (t,t,x) =A_\pm (t,x)$
To deal with \eqref{riex} on the characteristic curves through $(0,x)$, we set 
\begin{align*}
Y(\tau)= A_- (\tau,t,x) \sqrt{c(u(t,x_{-} (\tau)))} r_x (t,x_{-} (\tau)),\\
Q(\tau)= A_+ (\tau,t,x) \sqrt{c(u(t,x_{+} (\tau)))} s_x (t,x_{+} (\tau)).
\end{align*}
Since it holds that 
\begin{eqnarray}
\sqrt{c}s_x (t,x_{-} (t)) =   \frac{d}{dt}\theta_{\gamma}  (u(t,x_{-} (t))) \label{s-1}
\end{eqnarray}
from \eqref{riv}, we have 
\begin{align}
Y(t)  =& Y(0) +\int_0 ^t \dfrac{d}{d\tau} (A_- (\tau,0,x) a (t,x_- (\tau)))  \theta_{\gamma} (\tau,x_{-} (\tau)) d\tau \notag \\
&-   \frac{1}{2}(A_- (t,0,x) a (t,x_- (t)) \theta_{\gamma}(t,x_{-}(t)) - a(0,x_{-} (0)) \theta_{\gamma} (0,x_{-} (0)))  \notag \\
&-\int_0 ^t A_- (\tau,0,x) \dfrac{a_x (\tau ,x_{-} ( \tau))}{2} \sqrt{c} (r+s)  d\tau \notag \\
&-\int_0 ^t A_- (\tau,0,x)^{-1} \dfrac{\gamma +1}{4}u^{\frac{\gamma -3}{4}} Y(\tau)^2 d\tau. \label{eq1x}
\end{align}
Similarly we have
\begin{align}
Q(t)  =& Q(0) +\int_0 ^t \dfrac{d}{d\tau} (A_+ (\tau,0,x) a (t,x_+ (\tau)))  \theta_{\gamma} (\tau,x_{+} (\tau)) d\tau \notag \\
&-   \frac{1}{2}(A_+ (t,x) a (t,x_+ (t)) \theta_{\gamma}(t,x_{+}(t)) - a(0,x_{+} (0)) \theta_{\gamma} (0,x_{+} (0)))  \notag \\
&-\int_0 ^t A_+ (\tau,0,x) \dfrac{a_x (\tau ,x_{+} ( \tau))}{2} \sqrt{c} (r+s)  d\tau \notag \\
&-\int_0 ^t A_+ (\tau,0,x)^{-1} \dfrac{\gamma +1}{4}u^{\frac{\gamma -3}{4}} Q(\tau)^2 d\tau. \label{eq2x}
\end{align}

 \begin{Remark}{\bf Non-isentropic Euler equation} \label{non-is}
Using Riemann invariants,  non-isentropic Euler system \eqref{degp} can be written by
\begin{eqnarray}\left\{
\begin{array}{ll} 
\partial_- r =-\dfrac{m'}{2m\gamma}(r-s),\\
 \partial_+ s =\dfrac{m'}{2m\gamma}(s-r),
\end{array}\right.
\end{eqnarray}
where $m=e^{S/c_v}$ and the constant $c_v$ is called specific heat.  In \cite{GYZ}, to obtain a uniform estimate for $r$ and $s$,
they use the characteristic curve $t=t_\pm (x)$ such that
\begin{align*}
\frac{d}{dx}t_\pm (x) = \frac{\pm1}{c(u(t_\pm (x),x))}.
\end{align*}
We also remark that the integrability of $S'$ implies that
\begin{align*}
\int_{\R} \frac{|m' (x)|}{m (x)} dx= \frac{1}{c_v} \int_{\R} |S'(x)|dx.
\end{align*}
The method in \cite{GYZ} to control  the $L^\infty$-norm of the Riemann invariant is applicable to the case that $m'/m$ is time-dependent  such that
\begin{align*}
\int_\R \left|\frac{m_x}{m} (t,x)\right| dx \leq C.
\end{align*}
\end{Remark}

\subsection{Useful estimates}
In this subsection, we always suppose that it holds  that
\begin{align} \label{bo1}
\frac{1}{4}\leq  c(u(t,x )) \leq 4,
\end{align}
where we note that $c(1) =\sqrt{-p(1)}=1$. 
\begin{Lemma} \label{esA} 
Suppose that \eqref{asg-a}-\eqref{asg-d} are satisfied. Then the following uniform estimate  holds for $C^1 _b$ solution of \eqref{de0} satisfying \eqref{bo1}, 
\begin{eqnarray} \label{A-1}
C^{-1} e^{\frac{\mu_0 (\tau-t)}{2}}   \leq   A_{\pm} (\tau,t,x) \leq  C  e^{\frac{\mu_0 (\tau-t)}{2}}  ,
\end{eqnarray}
where the positive constant $C$ depending on $C_a$ defined in \eqref{asg-a}.
\end{Lemma}
\begin{proof}
$A_{\pm} (\tau,t,x)$ can be written
\begin{align*}
A_\pm (\tau,t,x) = e^{\frac{\mu_0 (t-\tau)}{2}}\exp\left( \int_0 ^\tau  \bar{a}(\bar{\tau},x_{\pm} ( \bar{\tau};t,x))/2 d\bar{\tau}\right).
\end{align*}
From \eqref{asg-a} and the assumption $\tau \leq t$, we estimate $A_{\pm}(\tau, t,x)$ as
\begin{align*}
A_\pm (\tau,t,x) \leq e^{\frac{\mu_0 (t-\tau)}{2}} & \exp\left( \int_0 ^t  |a(t,x_{\pm} (\tau;t,x))|/2 d \tau \right) \\
\leq & e^{\frac{\mu_0 (t-\tau)}{2}} \exp\left( C \int_0 ^t a_1 (\tau)+  a_2 (x_{\pm} (\tau;t,x))  d\tau\right) \\
 \leq & Ce^{\frac{\mu_0 (t-\tau)}{2}} \exp\left( C \int_0 ^t a_2 (x_{\pm} (\tau;t,x)) d\tau\right),
\end{align*}
where we used  the integrability of $a_1$.
Noting \eqref{bo1} and \eqref{fact},
by the change of variable $y=x_\pm (\tau)$, we have that
\begin{align*}
 \int_0 ^t  a_2 (x_{\pm} (\tau;t,x))  d\tau \leq & \frac{C}{c_1} \left| \int_{x_0} ^x  a_2 (y) dy \right| \\
  \leq& \frac{C}{c_1} \int_{\R} a_2 (y) dy <\infty.
\end{align*}
Hence we obtain the upper bounds of $A_{\pm} (\tau, t,x)$. The lower bounds can be shown in the same as above.
\end{proof}
The following Gronwall type inequality is used in the control of $r$ and $s$ in the outer region $\Omega (t)$.
\begin{Lemma} \label{gr} 
Let $\mu_0 \geq 0$. If $f \in C([0,T])$ satisfies that $f(t) \geq 0$ and
\begin{align*}
e^{\mu_0 t} f(t) \leq f(0) +  \int_0 ^t (\mu_0 +b(\tau)) e^{\mu_0 \tau} f(\tau) d\tau,
\end{align*}
then we have that
\begin{eqnarray*}
f(t) \leq e^{C_b} f(0) ,
\end{eqnarray*}
where $b$ is a nonnegative integrable function and  $C_b =\int_0 ^\infty b(t) dt < \infty$.
\begin{proof}
Setting
\begin{eqnarray*}
F(t) =f(0) +  \int_0 ^t (\mu_0 +b(\tau)) e^{\mu_0 \tau} f(\tau) d\tau,
\end{eqnarray*}
we have that from the assumption 
\begin{align*}
F'(t)  \leq  (\mu_0 + b(t)) F(t).  
\end{align*}
Thus 
\begin{align*}
(B^{-1}(t) e^{-\mu_0 t}F(t) )' \leq 0, 
\end{align*}
where $B(t) = \exp\left(   \int_0 ^t  b(\tau) d\tau  \right)$. Since $B$ and $B^{-1}$ are bounded from the integrability of $b$,
integrating this inequality over $[0,t]$,  we have  the desired inequality.
\end{proof}
\end{Lemma}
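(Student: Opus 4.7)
The statement is the standard integral-form Gronwall inequality, with the additional feature that the kernel $b$ is globally (not merely locally) integrable, so that the exponential constant can be taken independent of $t$. My plan is to run the classical argument via an auxiliary function and integrating factor, and only at the last step invoke $\int_0^\infty b < \infty$ to replace $\int_0^t b$ by $C_b$ in the exponent.

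First I would set $F(t) := f(0) + C_0 \int_0^t b(\tau) f(\tau)\,d\tau$. By continuity of $f$ and $b$ the function $F$ is $C^1$ on $[0,T]$, with $F'(t) = C_0\, b(t) f(t)$, and the hypothesis reads $f(t) \leq F(t)$ pointwise. Since $b \geq 0$, this gives the differential inequality $F'(t) \leq C_0\, b(t)\, F(t)$. I would then introduce the integrating factor $B(t) := \exp\bigl(C_0 \int_0^t b(\tau)\,d\tau\bigr)$, which is nondecreasing in $t$ and, thanks to the global integrability, sandwiched between $1$ and $e^{C_0 C_b}$; in particular $B^{-1}$ is well-defined and uniformly bounded on $[0,T]$. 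A direct computation yields $\bigl(B^{-1} F\bigr)'(t) = B^{-1}(t) \bigl( F'(t) - C_0 b(t) F(t) \bigr) \leq 0$, so $B^{-1} F$ is nonincreasing.

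Integrating this monotonicity over $[0,t]$ gives $F(t) \leq B(t) F(0) = B(t) f(0) \leq e^{C_0 C_b} f(0)$, and combining with $f \leq F$ delivers the desired estimate (which matches the stated $f(t) \leq e^{C_b} f(0)$ when $C_0 = 1$). There is essentially no real obstacle here: this is textbook Gronwall, and the only mildly new ingredient is the global-integrability upgrade that replaces $\int_0^t b(\tau)\,d\tau$ by its supremum $C_b$ in the exponent, producing a bound that is uniform in $t$ (in fact, in all of $[0,\infty)$). This is exactly the $t$-uniform shape that will be needed later in the paper to absorb the integrable damping $a_1+a_2$ coming from the standing assumption \eqref{as-b}.
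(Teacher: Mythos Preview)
Your proof is correct and follows essentially the same route as the paper: define the auxiliary $F(t)=f(0)+C_0\int_0^t b\,f$, derive $F'\le C_0 b F$, use the integrating factor $B(t)=\exp\bigl(C_0\int_0^t b\bigr)$ to conclude $(B^{-1}F)'\le 0$, and integrate. You are in fact slightly more careful than the paper in tracking the constant $C_0$ through to the exponent (the paper's stated bound $e^{C_b}$ tacitly takes $C_0=1$).
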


\section{Estimate of $r$ and $s$ in the outer region}
Henceforth, we often use the following fact for backward characteristics $x_\pm (\tau;t,x)$ with $\tau \leq t$ (see Figure 2)
\begin{align}\label{fact}
x_\pm (\tau;t,x) \in \Omega_{+} (\tau),\ \ \mbox{if}  \ x  \in \Omega_{+} (t).
\end{align} 
This property also holds for $\Omega_{-} (t)$.
\begin{center}{Figure 2: $\Omega_{+} (t)$ and characteristic curves}
{\unitlength 0.1in%
\begin{picture}(46.8800,19.4000)(-7.4000,-19.8000)%
%
\special{pn 8}%
\special{pa 313 1690}%
\special{pa 323 1658}%
\special{pa 333 1625}%
\special{pa 343 1593}%
\special{pa 354 1562}%
\special{pa 366 1531}%
\special{pa 378 1501}%
\special{pa 391 1472}%
\special{pa 406 1443}%
\special{pa 422 1416}%
\special{pa 439 1390}%
\special{pa 458 1366}%
\special{pa 479 1343}%
\special{pa 502 1322}%
\special{pa 527 1303}%
\special{pa 554 1285}%
\special{pa 582 1269}%
\special{pa 611 1253}%
\special{pa 641 1238}%
\special{pa 672 1224}%
\special{pa 703 1211}%
\special{pa 735 1197}%
\special{pa 767 1184}%
\special{pa 798 1171}%
\special{pa 860 1143}%
\special{pa 890 1129}%
\special{pa 946 1097}%
\special{pa 971 1079}%
\special{pa 996 1060}%
\special{pa 1018 1040}%
\special{pa 1038 1017}%
\special{pa 1055 993}%
\special{pa 1070 966}%
\special{pa 1083 938}%
\special{pa 1093 907}%
\special{pa 1102 876}%
\special{pa 1109 843}%
\special{pa 1115 809}%
\special{pa 1121 776}%
\special{pa 1133 708}%
\special{pa 1140 676}%
\special{pa 1148 644}%
\special{pa 1157 614}%
\special{pa 1169 585}%
\special{pa 1182 559}%
\special{pa 1199 535}%
\special{pa 1219 514}%
\special{pa 1241 495}%
\special{pa 1267 478}%
\special{pa 1294 464}%
\special{pa 1323 450}%
\special{pa 1353 438}%
\special{pa 1385 426}%
\special{pa 1416 414}%
\special{pa 1480 390}%
\special{pa 1511 376}%
\special{pa 1540 362}%
\special{pa 1568 345}%
\special{pa 1595 327}%
\special{pa 1621 307}%
\special{pa 1645 287}%
\special{pa 1669 265}%
\special{pa 1692 243}%
\special{pa 1736 197}%
\special{pa 1743 190}%
\special{fp}%
%
\special{pn 8}%
\special{pa 1093 918}%
\special{pa 2093 918}%
\special{fp}%
\put(19.0300,-9.0500){\makebox(0,0)[lb]{$\Omega_{+} (t)$}}%
\put(14.2300,-8.9000){\makebox(0,0)[lb]{$(x,t)$}}%
%
\special{pn 8}%
\special{pa 1609 918}%
\special{pa 1584 940}%
\special{pa 1560 961}%
\special{pa 1535 983}%
\special{pa 1489 1029}%
\special{pa 1468 1053}%
\special{pa 1448 1077}%
\special{pa 1430 1103}%
\special{pa 1414 1130}%
\special{pa 1400 1158}%
\special{pa 1376 1218}%
\special{pa 1366 1248}%
\special{pa 1336 1341}%
\special{pa 1326 1371}%
\special{pa 1302 1431}%
\special{pa 1289 1460}%
\special{pa 1259 1516}%
\special{pa 1242 1543}%
\special{pa 1224 1569}%
\special{pa 1205 1595}%
\special{pa 1165 1645}%
\special{pa 1144 1670}%
\special{pa 1129 1688}%
\special{fp}%
%
\special{pn 8}%
\special{pa 1607 918}%
\special{pa 1722 1028}%
\special{pa 1745 1051}%
\special{pa 1768 1073}%
\special{pa 1814 1119}%
\special{pa 1836 1142}%
\special{pa 1880 1190}%
\special{pa 1900 1215}%
\special{pa 1919 1240}%
\special{pa 1937 1267}%
\special{pa 1952 1295}%
\special{pa 1966 1323}%
\special{pa 1978 1353}%
\special{pa 1989 1383}%
\special{pa 2000 1414}%
\special{pa 2022 1474}%
\special{pa 2034 1504}%
\special{pa 2048 1532}%
\special{pa 2065 1559}%
\special{pa 2083 1585}%
\special{pa 2103 1609}%
\special{pa 2147 1657}%
\special{pa 2170 1680}%
\special{pa 2181 1690}%
\special{fp}%
\put(6.1000,-14.9500){\makebox(0,0)[lb]{$x_+ (\cdot ;t,x)$}}%
\put(20.6500,-15.0500){\makebox(0,0)[lb]{$x_- (\cdot ;t,x)$}}%
%
\special{pn 8}%
\special{pa 735 1198}%
\special{pa 2215 1198}%
\special{fp}%
%
\special{pn 8}%
\special{pa 205 1688}%
\special{pa 2445 1686}%
\special{fp}%
\special{sh 1}%
\special{pa 2445 1686}%
\special{pa 2378 1666}%
\special{pa 2392 1686}%
\special{pa 2378 1706}%
\special{pa 2445 1686}%
\special{fp}%
\put(23.2000,-16.4500){\makebox(0,0)[lb]{$x$}}%
\put(2.6500,-18.1500){\makebox(0,0)[lb]{$x=0$}}%
\put(19.0300,-11.7500){\makebox(0,0)[lb]{$\Omega_{+} (\tau)$}}%
\end{picture}}
%
\end{center}
We define $\Phi_{\pm}$ and $\Phi$ as follows
\begin{align*}
\Phi_{\pm}(t) = \|r(t)\|_{L^\infty (\Omega_{\pm} (t))} + \|s(t)\|_{L^\infty (\Omega_{\pm} (t))}.
\end{align*}
and
\begin{align*}
\Phi(t) = \|r(t)\|_{L^\infty (\Omega (t))} + \|s(t)\|_{L^\infty (\Omega (t))}.
\end{align*}
\begin{Lemma} \label{esP} 
Let  $\mu_0 \geq 1$. For $C^1 _b$ solution of \eqref{de0} satisfying \eqref{bo1}.
it  holds  that 
\begin{eqnarray}
\Phi (t) \leq C \Phi(0) = C( \|r(0,\cdot )\|_{L^\infty (\R)} + \|s(0,\cdot )\|_{L^\infty (\R)}).\label{mes}
\end{eqnarray}
\end{Lemma}
\begin{proof}
We  show that with double-sign corresponds
\begin{eqnarray}
	\Phi_{\pm}(t) \leq C \Phi_{\pm}(0) = C( \|r(0,\cdot )\|_{L^\infty (\pm x \geq 0)} + \|s(0,\cdot )\|_{L^\infty(\pm x\geq 0)}).\label{mespm}
\end{eqnarray}
We only prove that
\begin{align*}
\Phi_{+} (t) \leq C \Phi_{+}(0) .
\end{align*}
The estimate of $\Psi_{-} (t)$ can be shown in the similar way.
From \eqref{asg-a} and \eqref{asg-a}, we have that for $x \in \Omega_+ (t)$
\begin{align}
e^{\frac{\mu_0 t}{2}} |r(t,x)| \leq & |r(0,x_{-} (0))| + \int_0 ^t e^{\frac{\mu_0 \tau}{2}} \frac{\mu_0 |s(\tau , x_{-}(\tau))|}{2} d\tau  \notag \\
& +\int_0 ^t  e^{\frac{\mu_0 \tau}{2}} \frac{a_1 (\tau) +a_2(x_{-} (\tau))}{2} (|r(\tau , x_{-}(\tau))| + |s(\tau , x_{-}(\tau))|)  d \tau, \label{riv-in}  
\end{align}
and
\begin{align}
e^{\frac{\mu_0 t}{2}} |s(t,y)| \leq & |s(0,x_{+} (0))| + \int_0 ^t e^{\frac{\mu_0 \tau}{2}} \frac{\mu_0 |r(\tau , x_{+}(\tau))|}{2} d\tau  \notag \\
& +\int_0 ^t  e^{\frac{\mu_0 \tau}{2}}\frac{(a_1 (\tau) + a_2(x_{+} (\tau)))}{2} (|r(\tau , x_{+}(\tau))| + |s(\tau , x_{+}(\tau))|)  d \tau. \label{siv-in}
\end{align}
We estimate $a_2(x_{\pm} (\tau))=a_2(x_{\pm} (\tau;t,x))$ with $x \in \Omega_+ (t)$.
Putting $x_0= x_+ (0;t,x)$, we have from the definition of the characteristic curve (see also Figure 3)
\begin{align*}
x_+ (\tau;t,x) = & x + \int_t ^\tau c(u(s, x_+ (s;t,x))) ds \\
=& x+  \int_t ^0 c(u(s, x_+ (s;t,x))) ds - \int_\tau ^0  c(u(s, x_+ (s;t,x))) ds \\
=& x_0 + \int_0 ^\tau   c(u(s, x_+ (s;t,x))) ds.
\end{align*} 
Since $x_0 \in \Omega_{+} (0)$ (namely $x_0 \geq0$),  we obtain from \eqref{bo1}
\begin{align} \label{xtau1}
x_+ (\tau;t,x) \geq  \frac{\tau}{4}.
\end{align}
From the fact that $x_+ (\tau;t,x)  \leq x_- (\tau;t,x) $ with $\tau \leq t$, we also obtain that
\begin{align}\label{xtau2}
x_- (\tau;t,x) \geq  \frac{\tau}{4}.
\end{align}
Hence, from \eqref{asg-c}, we have that
\begin{align*}
a_2(x_{\pm} (\tau))\leq  a_2(\frac{\tau}{4}).
\end{align*}
Thus, taking $L^\infty$-norm with $x \in \Omega_{+} (t)$ and summing up  \eqref{riv-in} and \eqref{siv-in}, from \eqref{fact}, we have that 
\begin{align}
e^{\frac{\mu_0 t}{2}} \Phi_+ (t)  \leq & \Phi_+ (0) + \int_0 ^t \left(\frac{\mu_0}{2} +\bar{b}(\tau) \right)e^{\frac{\tau}{2}}\Phi_+ (\tau) d\tau  \label{b-in}
\end{align} 
where we put
\begin{align*}
\bar{b}(\tau) =2 a_1 (\tau) + 2 a_2(\frac{\tau}{4}).
\end{align*} 
From Lemma \ref{gr}, we have that
\begin{align*}
\Phi_+ (t) \leq C \Phi_+ (0).
\end{align*}

\begin{center}{Figure 3: $\Omega_\pm (t)$, characteristic curves and the line $x=\frac{t}{4}$}
{\unitlength 0.1in%
\begin{picture}(46.8800,24.4000)(3.4000,-24.8000)%
%
\special{pn 8}%
\special{pa 340 2467}%
\special{pa 4754 2467}%
\special{fp}%
%
\special{pn 8}%
\special{pa 2493 2467}%
\special{pa 2547 2429}%
\special{pa 2573 2410}%
\special{pa 2600 2390}%
\special{pa 2626 2371}%
\special{pa 2704 2311}%
\special{pa 2729 2291}%
\special{pa 2753 2270}%
\special{pa 2777 2248}%
\special{pa 2800 2227}%
\special{pa 2845 2182}%
\special{pa 2887 2134}%
\special{pa 2906 2110}%
\special{pa 2925 2084}%
\special{pa 2959 2032}%
\special{pa 2975 2004}%
\special{pa 2990 1976}%
\special{pa 3004 1948}%
\special{pa 3032 1890}%
\special{pa 3045 1860}%
\special{pa 3058 1831}%
\special{pa 3071 1801}%
\special{pa 3085 1772}%
\special{pa 3098 1742}%
\special{pa 3111 1713}%
\special{pa 3125 1683}%
\special{pa 3140 1654}%
\special{pa 3170 1598}%
\special{pa 3187 1570}%
\special{pa 3204 1544}%
\special{pa 3223 1517}%
\special{pa 3242 1492}%
\special{pa 3263 1468}%
\special{pa 3285 1444}%
\special{pa 3308 1421}%
\special{pa 3356 1377}%
\special{pa 3381 1355}%
\special{pa 3405 1333}%
\special{pa 3428 1311}%
\special{pa 3451 1288}%
\special{pa 3473 1265}%
\special{pa 3493 1241}%
\special{pa 3512 1216}%
\special{pa 3529 1190}%
\special{pa 3543 1162}%
\special{pa 3555 1134}%
\special{pa 3566 1104}%
\special{pa 3576 1073}%
\special{pa 3584 1042}%
\special{pa 3592 1010}%
\special{pa 3600 979}%
\special{pa 3608 947}%
\special{pa 3626 885}%
\special{pa 3638 855}%
\special{pa 3651 826}%
\special{pa 3666 799}%
\special{pa 3682 772}%
\special{pa 3720 720}%
\special{pa 3740 695}%
\special{pa 3761 671}%
\special{pa 3783 646}%
\special{pa 3804 622}%
\special{pa 3825 597}%
\special{pa 3846 573}%
\special{pa 3866 547}%
\special{pa 3902 495}%
\special{pa 3919 468}%
\special{pa 3951 412}%
\special{pa 3966 384}%
\special{pa 4008 297}%
\special{pa 4021 268}%
\special{pa 4028 253}%
\special{fp}%
%
\special{pn 8}%
\special{pa 2493 2467}%
\special{pa 2468 2446}%
\special{pa 2444 2426}%
\special{pa 2419 2405}%
\special{pa 2395 2384}%
\special{pa 2370 2363}%
\special{pa 2346 2342}%
\special{pa 2323 2321}%
\special{pa 2299 2299}%
\special{pa 2276 2277}%
\special{pa 2253 2254}%
\special{pa 2231 2231}%
\special{pa 2210 2208}%
\special{pa 2188 2184}%
\special{pa 2168 2159}%
\special{pa 2147 2135}%
\special{pa 2107 2085}%
\special{pa 2086 2061}%
\special{pa 2065 2036}%
\special{pa 2044 2012}%
\special{pa 2022 1989}%
\special{pa 1976 1945}%
\special{pa 1952 1924}%
\special{pa 1927 1903}%
\special{pa 1903 1883}%
\special{pa 1853 1841}%
\special{pa 1829 1820}%
\special{pa 1806 1799}%
\special{pa 1783 1776}%
\special{pa 1762 1753}%
\special{pa 1741 1729}%
\special{pa 1721 1703}%
\special{pa 1702 1678}%
\special{pa 1684 1651}%
\special{pa 1666 1625}%
\special{pa 1649 1597}%
\special{pa 1632 1570}%
\special{pa 1568 1458}%
\special{pa 1553 1430}%
\special{pa 1537 1403}%
\special{pa 1477 1291}%
\special{pa 1463 1263}%
\special{pa 1421 1176}%
\special{pa 1408 1147}%
\special{pa 1395 1117}%
\special{pa 1383 1086}%
\special{pa 1371 1056}%
\special{pa 1359 1024}%
\special{pa 1347 993}%
\special{pa 1336 961}%
\special{pa 1323 930}%
\special{pa 1311 899}%
\special{pa 1297 870}%
\special{pa 1282 841}%
\special{pa 1266 813}%
\special{pa 1249 787}%
\special{pa 1229 762}%
\special{pa 1208 739}%
\special{pa 1184 719}%
\special{pa 1158 701}%
\special{pa 1130 685}%
\special{pa 1070 657}%
\special{pa 1039 644}%
\special{pa 1009 630}%
\special{pa 981 616}%
\special{pa 954 599}%
\special{pa 930 581}%
\special{pa 910 559}%
\special{pa 893 535}%
\special{pa 879 508}%
\special{pa 868 479}%
\special{pa 859 448}%
\special{pa 852 415}%
\special{pa 846 381}%
\special{pa 841 347}%
\special{pa 837 312}%
\special{pa 836 304}%
\special{fp}%
%
\special{pn 8}%
\special{pa 4274 1741}%
\special{pa 4190 1795}%
\special{pa 4164 1814}%
\special{pa 4137 1833}%
\special{pa 4112 1853}%
\special{pa 4089 1874}%
\special{pa 4066 1896}%
\special{pa 4046 1919}%
\special{pa 4027 1944}%
\special{pa 4010 1970}%
\special{pa 3995 1998}%
\special{pa 3981 2026}%
\special{pa 3968 2056}%
\special{pa 3956 2085}%
\special{pa 3945 2116}%
\special{pa 3933 2146}%
\special{pa 3921 2177}%
\special{pa 3909 2206}%
\special{pa 3896 2236}%
\special{pa 3881 2264}%
\special{pa 3865 2292}%
\special{pa 3848 2319}%
\special{pa 3810 2371}%
\special{pa 3791 2396}%
\special{pa 3770 2421}%
\special{pa 3750 2446}%
\special{pa 3731 2468}%
\special{fp}%
%
\special{pn 8}%
\special{pa 4299 1751}%
\special{pa 4328 1767}%
\special{pa 4358 1783}%
\special{pa 4386 1799}%
\special{pa 4413 1817}%
\special{pa 4438 1836}%
\special{pa 4461 1857}%
\special{pa 4482 1880}%
\special{pa 4500 1904}%
\special{pa 4516 1931}%
\special{pa 4530 1960}%
\special{pa 4542 1989}%
\special{pa 4586 2113}%
\special{pa 4598 2144}%
\special{pa 4611 2173}%
\special{pa 4639 2231}%
\special{pa 4654 2259}%
\special{pa 4668 2287}%
\special{pa 4696 2345}%
\special{pa 4709 2374}%
\special{pa 4721 2404}%
\special{pa 4734 2433}%
\special{pa 4746 2463}%
\special{pa 4747 2465}%
\special{fp}%
%
\special{pn 8}%
\special{pa 3105 1740}%
\special{pa 4717 1740}%
\special{fp}%
%
\special{pn 8}%
\special{pa 1737 1740}%
\special{pa 1737 1740}%
\special{fp}%
%
\special{pn 8}%
\special{pa 1737 1740}%
\special{pa 364 1740}%
\special{fp}%
\put(8.6600,-16.9500){\makebox(0,0)[lb]{$\Omega_{-} (t)$}}%
\put(40.6800,-17.1500){\makebox(0,0)[lb]{$(x,t)$}}%
\put(33.1800,-17.1500){\makebox(0,0)[lb]{$\Omega_{+} (t)$}}%
\put(36.9000,-11.2000){\makebox(0,0)[lb]{$x_+ (\cdot;0,0)$}}%
\put(6.9000,-11.6000){\makebox(0,0)[lb]{$x_- (\cdot;0,0)$}}%
%
\special{pn 8}%
\special{pa 2493 2467}%
\special{pa 3460 240}%
\special{fp}%
\put(36.1000,-24.2300){\makebox(0,0)[lb]{$x_1$}}%
\put(46.3200,-21.0400){\makebox(0,0)[lb]{$x_- (\cdot;t,x)$}}%
\put(33.0000,-21.0300){\makebox(0,0)[lb]{$x_+ (\cdot;t,x)$}}%
\put(49.4200,-24.3100){\makebox(0,0)[lb]{$x$}}%
\put(23.8400,-4.4400){\makebox(0,0)[lb]{$t$}}%
\put(26.0500,-7.2500){\makebox(0,0)[lb]{$x=\frac{ t }{4}$}}%
%
\special{pn 8}%
\special{pa 4312 1757}%
\special{pa 4277 1746}%
\special{fp}%
%
\special{pn 8}%
\special{pa 2722 687}%
\special{pa 2722 661}%
\special{fp}%
%
\special{pn 8}%
\special{pa 4751 2467}%
\special{pa 5091 2467}%
\special{fp}%
\special{sh 1}%
\special{pa 5091 2467}%
\special{pa 5024 2447}%
\special{pa 5038 2467}%
\special{pa 5024 2487}%
\special{pa 5091 2467}%
\special{fp}%
%
\special{pn 8}%
\special{pa 2492 2466}%
\special{pa 2492 306}%
\special{fp}%
\special{sh 1}%
\special{pa 2492 306}%
\special{pa 2472 373}%
\special{pa 2492 359}%
\special{pa 2512 373}%
\special{pa 2492 306}%
\special{fp}%
\end{picture}}%
%
\end{center}

\end{proof}

\section{Estimates of $r$ and $s$ in the inner region}
In this section, we show a estimate for $r,s$ in the inner region
\begin{align*}
I(t) =\{ x \in \R  \ | \ x_- (t;0,0) \leq x \leq x_+ (t;0,0)  \}.
\end{align*} 
The following type of the maximal principle is found in \cite{CLLMZ} (see Remark \ref{re-cllmz}).
\begin{Lemma} \label{es-in} Let $\mu_0 \leq 0$. Suppose that \eqref{asg-d} and \eqref{bo1} are satisfied.  Then the following estimates hold 
\begin{align} 
\max \{ \sup_{[0,T]}\|r(t,x) \|_{L^\infty (I(t))}, \sup_{[0,T]}\|s(t,x)\|_{L^\infty (I(t))} \}   \notag \\
\leq  \max \{  \sup_{[0,T]}|r|(t,x_+ (t)),  \sup_{[0,T]}|s|(t,x_+ (t)),   \sup_{[0,T]}|r|(t,x_- (t)),  \sup_{[0,T]}|s|(t,x_- (t)) \},
\label{es-in-con}  
\end{align}
where we abbreviate $x_\pm (t)) = x_\pm (t;0,0))$.
\end{Lemma}
\begin{proof}
For arbitrarily $\delta>0$,  we set 
\begin{align*}
r_{\delta}= r e^{-\delta t}, \\
s_\delta=s e^{-\delta t}.
\end{align*}
$R_\delta$ and $S_\delta$ are solutions to 
\begin{eqnarray}\label{wwd}\left\{
\begin{array}{ll} 
\partial_{-} r_\delta + \delta r_\delta =- \frac{a}{2}(r_\delta + s_\delta) \\
\partial_{-} s_\delta + \delta s_\delta =- \frac{a}{2}(r_\delta + s_\delta)
\end{array}\right.
\end{eqnarray}
First, for $r_\delta$ and $s_\delta$, we show that \eqref{es-in-con} is true.
In the contradiction argument, we suppose that $|r_\delta|$ or $|s_\delta|$ has a maximum value t at $(t_0,x_0)$ in the interior of $\cup_{[0,T]} I(t)$ or $\{  t=T \} \times (x_- (T;0,0), x_+(T;0,0))$.
From the symmetry, we can assume that $|r_\delta (t_0, x_0) | \geq |s_\delta (t_0,x_0)|$.
It is obvious that we can assume that $|r_\delta (t_0, x_0) |>0$ in this contradiction argument.
Thus $|r_\delta|$ is $C^1$ in the neighbor of $(t_0 ,x_0)$.  Multiplying the first equation of \eqref{wwd} by $|r_\delta|^{-1} r_\delta$, we have that
\begin{align*}
|r_\delta|_t - c |r_\delta|_x  +\delta |r_\delta|=- \frac{a}{2}(|r_\delta| +|r_\delta|^{-1} r_\delta s_\delta ).
\end{align*}
From the standard argument, we can easily see that
\begin{align*}
|r_\delta (t_0,x_0)|_t \geq 0 \ \mbox{and} \   |r_\delta (t_0,x_0)|_x =0.
\end{align*}
Hence the left had side is  strictly positive. On the other hand,  from \eqref{asg-d}, it follows that
\begin{align*}
\frac{a}{2}(|r_\delta (t_0, x_0)| +|r|^{-1} r_\delta s_\delta (t_0 ,x_0)) \geq\frac{a}{2}( |r_\delta (t_0, x_0)|  - |s_\delta (t_0 ,x_0)|) \geq 0,
\end{align*}
which leads a contradiction. 
Therefore, we have \eqref{es-in-con} for $r_\delta$ and $s_\delta$, which implies  that
\begin{align*} 
\max \{ \sup_{[0,T]}\|r(t,x) \|_{L^\infty (I(t))}, \sup_{[0,T]}\|s(t,x)\|_{L^\infty (I(t))} \} \\
\leq e^{ \delta T} \max \{  \sup_{[0,T]}|r|(t,x_+ (t)),  \sup_{[0,T]}|s|(t,x_+ (t)),   \sup_{[0,T]}|r|(t,x_- (t)),  \sup_{[0,T]}|s|(t,x_- (t)) \}.
\end{align*}
Taking $\delta \rightarrow 0$, we have \eqref{es-in-con} for $r$ and $s$.
\end{proof}
\begin{Remark} \label{re-cllmz}
In \cite{CLLMZ}, they have shown this type of maximal principle in  $[0,T] \times [a,b]$.
In their  proof, the following energy   functional  is used
\begin{align*}
\int_a ^b |r|^{n} + |s|^{n} dx,
\end{align*}
where $n$ is an even natural number.  Estimating this functional and taking $n \rightarrow \infty$,
they obtain the desired estimate. It would be possible to apply their strategy to the proof of Lemma \ref{es-in}.
However, this paper give a more simple and maximal principle like proof.
\end{Remark}

\section{Proof of Theorem\ref{main5}}
\subsection{Continuity argument}
Combining Lemma \ref{esP} and the continuity argument, we can show the following lemma to control $u, v$ and $c$
\begin{Lemma} \label{s-es} If \eqref{asg-a}-\eqref{asg-c} are satisfied,  then the following estimates hold on $[0, T^*)$ for sufficient small $\varepsilon >0$
with $t\in [0,T^*)$ and  $x \in \Omega (t)$ respectively
\begin{align}
|u(t,x) -1| \leq  C\varepsilon, \label{u-a}  \\
|v(t,x)| \leq C\varepsilon, \label{v-e} \\ 
|r(t,x) | + |s(t,x)| \leq C\varepsilon . \label{rs-e}  
\end{align}
Moreover, if \eqref{asg-c} is also assumed,  then the above three estimates hold for $t \in [0,T^*)$ and $x \in \R$. 
\end{Lemma}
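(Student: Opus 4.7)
The approach is a bootstrap/continuity argument that upgrades the conditional estimate of Lemma \ref{esP} into an unconditional one on $[0,T^*)$. First, the initial Riemann invariants are $O(\varepsilon)$: expanding $\eta$ around $u=1$, where $\eta(1)=\frac{2}{\gamma-1}$, one obtains $|\eta(u(0,x))-\frac{2}{\gamma-1}|\leq C\varepsilon\|\varphi\|_{L^\infty}$, and combined with $v(0,x)=\varepsilon\psi(x)$ this gives
\[
|r(0,x)|+|s(0,x)|\leq C_0\,\varepsilon
\]
uniformly in $x\in\R$, with $C_0$ depending only on $\|\varphi\|_{C^1}$ and $\|\psi\|_{L^\infty}$.

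Let $R(t)$ denote whichever region is relevant ($\Omega(t)$ when $x_0=0$, or $\Omega_{\pm,x_0}(t)$ when $\pm x_0>0$). Fix a constant $M$ to be chosen later and set
\[
T_M := \sup\bigl\{\,T\in[0,T^*) : |r(t,x)|+|s(t,x)|\leq M\varepsilon \text{ for all } t\in[0,T],\ x\in R(t)\,\bigr\}.
\]
Since $(r,s)$ is $C^1$ on $[0,T^*)\times\R$ and the initial bound holds with any $M\geq C_0$, we have $T_M>0$ for $\varepsilon>0$ small. On $[0,T_M]\cap\{x\in R(t)\}$ the bound $|s-r|\leq 2M\varepsilon$ together with the smoothness of $u\mapsto\eta(u)$ near $u=1$ forces $|u-1|\leq C' M\varepsilon$, so that, provided $\varepsilon$ is sufficiently small (depending on $M$), the hypothesis \eqref{bo1} is satisfied throughout $R(t)$ for $t\in[0,T_M]$. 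Lemma \ref{esP} then applies and returns
\[
|r(t,x)|+|s(t,x)|\leq C_1\bigl(\|r(0)\|_{L^\infty(R(0))}+\|s(0)\|_{L^\infty(R(0))}\bigr)\leq C_1 C_0\,\varepsilon
\]
on this time interval.

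Now choose $M:=2C_1 C_0$. The previous inequality is then strictly better than the bootstrap assumption on $[0,T_M]$, so by continuity in $t$ one concludes $T_M=T^*$, proving \eqref{rs-e}. Estimates \eqref{u-a} and \eqref{v-e} follow at once from \eqref{rs-e} via the identities $2v=r+s$ and $2(\eta-\tfrac{2}{\gamma-1})=s-r$ together with local invertibility of $\eta$ near $u=1$.

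The delicate point is ensuring that the output constant $C_1$ of Lemma \ref{esP} is \emph{independent} of the bootstrap parameter $M$. Inspecting that lemma's proof, \eqref{bo1} enters only through the fixed bounds $c_1/4\leq c\leq 4c_1$, while the trapping property \eqref{fact} used to confine the backward characteristics $x_\pm(\tau;t,x)$ to $R(\tau)$ is purely geometric and uses only the sign of $\pm c$. Hence $C_1$ depends only on $c_1$, $\gamma$ and $C_a$, and the bootstrap closes as above.
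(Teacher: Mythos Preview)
Your proof is correct and follows essentially the same continuity argument as the paper. The only minor difference is that the paper bootstraps directly on the fixed condition \eqref{bo1} (i.e., $c_1/4\leq c(u)\leq 4c_1$) rather than on $|r|+|s|\leq M\varepsilon$, which sidesteps having to verify that the output constant $C_1$ of Lemma \ref{esP} is independent of the bootstrap parameter $M$; your handling of this point is nonetheless correct.
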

\begin{proof}
We show this lemma by the continuity argument.
We fix $T \in (0,T^*)$ arbitrarily. 
From the continuity of the solution, \eqref{bo1} holds neat $t=0$, if $\varepsilon >0$ is small.
Let us denote by $T _1$ the maximal time that \eqref{bo1} holds.
We suppose that $T_1 < T$ in the contradiction argument.
From Lemma \ref{esP}, \eqref{rs-e} holds on $[0, T_1]$, which implies that \eqref{u-a} and \eqref{v-e} for sufficient small $\varepsilon$.
Thus, for sufficiently small $\varepsilon >0$, we find that it holds on $(t,x) \in [0,T_1] \times \Omega (t)$ 
\begin{align*}
\frac{1}{2} \leq c(u) \leq 2.
\end{align*}
We note that this smallness of $\varepsilon $ is independent of $T$.
Hence, from the continuity of the solution, the estimate \eqref{bo1} can be extend to the interval $[0,T_2]$ for $T_1 < T_2 \leq T$,
which contradicts to the definition of $T_1$.
From the arbitrariness of $T$, we see that \eqref{bo1} holds on $[0,T^*)$, which implied \eqref{u-a}, \eqref{v-e} and \eqref{rs-e} hold on $[0,T^*)$ by Lemma \ref{esP}.

Furthermore,under the assumption \eqref{asg-d}, one can find that the above argument is valid in $\R$, 
since the right hand side of \eqref{es-in-con} in Lemma \eqref{es-in} is bounded by $C\varepsilon $. 
\end{proof}
\subsection{Conclusion}
First we consider the case that $x_0 \geq 0$. We consider the equation of $Q$ in \eqref{eq2x} on the characteristic curve $x_+ (\cdot;0,x_0)$ .

It should be noted that the following uniform estimate holds
\begin{eqnarray} \label{a-es-c}
\int_0 ^t | a(\tau, x_{\pm}(\tau;0,0))| + | a_t(\tau, x_{\pm}(\tau;0,0))| +  |a_x(\tau, x_{\pm}(\tau;0,0))| d\tau \leq C. 
\end{eqnarray}
We also remark that  it holds that $x_+ (t;0,x_0) \in \Omega_+ (t) $.
Applying this inequality, \eqref{u-a}, \eqref{v-e}, \eqref{rs-e} and Lemma \ref{esA} to \eqref{eq2x} (note that $Y$ and $R$ do not appear in \eqref{eq2x}), we have that
\begin{eqnarray} \label{q-ineq}
Q(t)\leq Q(0) + K^*  \varepsilon - C\int_0 ^t  Q(\tau)^2 d\tau,
\end{eqnarray}
where $K^*$ is a positive constant depending on $\|\phi \|_{L^\infty}$, $\|\psi \|_{L^\infty}$, $\|\phi_x \|_{L^\infty}$ and   $C_a$ \eqref{asg-a}.
From the assumption  \eqref{KK}, it follows that
\begin{align} \label{q-q}
	Q(t)\leq  -C_1\varepsilon(K-K^*) - C_2\int_0 ^t  Q(\tau)^2 d\tau,
\end{align}
which implies that $-Q(t)$ blows up in finite time for sufficiently large $K.$
In fact, putting $q(t)$ as  a solution of the integral equation:
\begin{align*}
q(t)=  C_1\varepsilon(K-K^*) +C_2\int_0 ^t  q(\tau)^2 d\tau,
\end{align*}
we can easily check that $-Q(t) \geq q(t)$. Solving this integral equation, we have that
\begin{align*}
q(t) = \frac{C_1\varepsilon(K-K^*)}{1- C_1C_2\varepsilon(K-K^*)t}
\end{align*}
Thus  we also obtain the upper estimate of the life-span for $Q(t)$
\begin{align*}
T^* \leq \frac{1}{C_1C_2\varepsilon(K-K^*)}.
\end{align*}
In the case that $x_0 <0$, we can show that  $Y(t)$ in \eqref{eq1x} on the characteristic curve  $x_- (\cdot;0,x_0)$  blows up in finite time.

Next we show the estimate of the  life-span  from below.
In the same way as in the proof of \eqref{q-q},if \eqref{asg-d} is assumed, from Lemma \ref{s-es}, we can obtain that 
\begin{align*}
	|Q(t)|\leq  C_3+ C_4\int_0 ^t  Q(\tau)^2 d\tau.
\end{align*}
Similarly, we have that on the characteristic curve $x_- (\cdot;t,x)$
\begin{align*}
	|Y(t)|\leq  C_3 + C_4\int_0 ^t  Y(\tau)^2 d\tau.
\end{align*}
Solving these integral equations, we have that
\begin{align*}
T^* \geq C\varepsilon^{-1}.
\end{align*} 
From the boundedness of $u$, $v$, $p'$, time or space derivatives of $(u,v)$ blow up at $T^*$.
From the equations in \eqref{de0}, the blow-up of $(u_t, v_t)$ implies that of $(u_x ,v_x)$.
Hence we complete the proof of Theorem \ref{main5}.
\section{Proof of Theorem \ref{main5g}}
\subsection{Estimate of $r_x$ and $s_x$}
Since the estimates of the functions $r$ and $s$ itself has already been completed in Lemma \ref{s-es}, all that remains is that of derivatives.
We show the boundedness of $r_x$ and $s_x$ as follows. This lemma immediately implies the completeness of the proof of Theorem \ref{main5g}. 
\begin{Lemma} \label{s-esx} Let $\mu_0 >0$. The following estimates hold on $[0, T^*)$ for sufficient small $\varepsilon= \varepsilon(\|f\|_{C^1 _b} , \|g\|_{C^1 _b})>0$
\begin{align}
|r_x(t,x) | + |s_x(t,x)| \leq C\varepsilon . \label{rsx-e}  
\end{align}
\end{Lemma}
\begin{proof}
Applying \eqref{a-es-c}, \eqref{u-a}, \eqref{v-e}, \eqref{rs-e} and Lemma \ref{esA} to \eqref{eq1x} and \eqref{eq2x}, we have that
\begin{eqnarray} \label{qq-ineq}
\|r_x (t) \|_{L^\infty}  \leq C_0 \varepsilon +  e^{-\frac{\mu_0 t}{2}} \int_0 ^t e^{\frac{\mu_0 \tau}{2}} \| r_x (\tau)\|^2_{L^\infty} d\tau
\end{eqnarray}
and
\begin{eqnarray} \label{y-ineq}
\|s_x (t) \|_{L^\infty}  \leq C_0 \varepsilon +  e^{-\frac{\mu_0 t}{2}} \int_0 ^t e^{\frac{\mu_0 \tau}{2}} \| s_x (\tau)\|^2_{L^\infty} d\tau,
\end{eqnarray}
which immediately implies \eqref{rsx-e}, if $\varepsilon >0$ is sufficiently small.

\end{proof}

\section*{Acknowledgments}
The author would like to Thank Prof. Hiroyuki Takamura for his comment on generalization of the damping coefficient.
The research is  supported by  Grant-in-Aid for Early-Career Scientists, No. 19K14573.





\end{document}